\documentclass[letterpaper, 10 pt, conference]{ieeeconf}  

\IEEEoverridecommandlockouts                              
\pdfoutput=1

\usepackage{microtype}
\usepackage{slashbox}
\usepackage{graphicx}
\usepackage{subcaption}
\usepackage{booktabs} 
\RequirePackage[loading]{tracefnt}
\usepackage{hyperref}
\usepackage{bbm}
\usepackage{cite}
\usepackage{dsfont}
\usepackage{float}

\usepackage{amsmath}
\usepackage{amssymb}
\usepackage{mathtools}
\usepackage{multirow}
\let\proof\relax

\usepackage{amsthm}
\usepackage{subcaption}
\usepackage{algorithm}
\usepackage{algpseudocode}

\newtheorem{proposition}{Proposition}

\newtheorem{corollary}{Corollary}
\theoremstyle{definition}

\renewcommand{\qed}{\hfill\blacksquare}

\title{\LARGE \bf
Prescriptive Optimization for Adaptive Auto-insurance Pricing with Telematics Data

}

\author{Qinyang He and Yonatan Mintz
\thanks{The authors are with the Department of Industrial and Systems Engineering, University of Wisconsin-Madison, Madison, WI, 53705, USA {qhe57,ymintz}@wisc.edu}}

\begin{document}

\maketitle
\thispagestyle{empty}
\pagestyle{empty}

\begin{abstract}
Usage-based insurance (UBI) uses telematics to align premiums with risk and encourage safe driving. However, deploying these programs is challenging due to heavy-tailed claim costs, nonstationary driver behavior, and limited incentive budgets. While existing research focuses on profiling drivers, prescriptive pricing remains underexplored. We propose an optimal control framework that integrates telematics directly into dynamic pricing. Our approach (i) learns claim frequency and severity, (ii) models multi-period behavioral evolution in response to discounts, and (iii) optimizes portfolio-wide discount allocation using a Lagrangian relaxation. This decomposes the non-convex centralized problem into independent dynamical systems. We theoretically prove this relaxation's duality gap vanishes as the portfolio scales, guaranteeing asymptotic optimality. We validate our approach computationally on a simulated industry-scale portfolio. Our results demonstrate not only the computational tractability of our approach but also that it  outperforms  static baselines, reducing both expected losses and claim probabilities to benefit insurers and policyholders alike.
\end{abstract}
\section{Introduction}
The rapid advancement in telematics data collection and sensor technology has significantly transformed the automotive insurance industry, leading to more sophisticated approaches to pricing insurance premiums based on personalized risk assessment. Traditional auto insurance models, which predominantly use demographic factors and historical claims data, often lack precision in capturing real-time, individual driving behavior and associated risks. The emergence of Usage-Based Insurance (UBI), enabled by telematics technology, addresses this gap by leveraging granular driving data to personalize premiums and enhance predictive accuracy.

From a commercial perspective, the application of UBI has experienced several stages. Initially implemented as Pay-As-You-Drive (PAYD) models, where premiums were primarily based on distance traveled, UBI evolved to Pay-How-You-Drive (PHYD) and Manage-How-You-Drive (MHYD) models, incorporating behavioral metrics such as speed, acceleration, braking patterns, and even proactive driver feedback systems \cite{arumugam_survey_2019}. This evolution underscores an increasing emphasis on not only monitoring driving habits but actively influencing driver behavior to improve road safety. Usage-Based Insurance (UBI) offers significant advantages to insurance companies. By leveraging telematics data, insurers can more precisely tailor premiums to individual driving behaviors, leading to improved underwriting precision and profitability. This approach not only attracts safer drivers but also fosters customer loyalty through personalized feedback and incentives \cite{chan2024unsupervised}.

UBI has not only been advantageous to policy providers but also to program participants. Empirical evidence suggests benefits to policy holders include improved safety through behavior modification, better premium customization reflecting actual risk, and economic incentives fostering safer driving practices. For instance, research by \cite{soleymanian_sensor_2019} indicates that drivers enrolled in UBI programs significantly reduce risky behaviors, such as hard braking, due to real-time feedback and economic incentives. The main challenge of utilizing the telematics data to inform these UBI programs effectively is building a model that maps the telematics into actionable measures for decision making. Several works have attempted to address this challenge. Ayuso et al. \cite{ayuso2019improving} model claim frequency with a Poisson GLM, incorporating exposure (annual km) and telematics summaries (e.g., percentage of distance driven at night, over speed limit, in urban areas) alongside traditional rating factors to estimate expected claims. Fang et al. \cite{fang2021mocha} develop a multi-modal, multi-task LSTM that forecasts a driver's next-n-day distance, time and speed-variance through a combination of individual-level spatiotemporal tensors with dynamically updated cluster-level (``group'') tensors.

However, from a decision making perspective, the above literature only focuses on half of the end-to-end process of determining insurance pricing from UBI data which is the prediction of claim frequency and size. To date there has not been as much exploration into how to convert these predictions into an effective pricing approach. Due to the narrow profit margin, need for interpretability and many restrictions on pricing policy, mathematical optimization is usually used for deciding the optimal prices \cite{emms2007pricing, yeo2002mathematical,li2022optimal}. Since the application of UBI to insurance pricing involves frequent prediction from observation data followed by calibrated decision making by optimization methods, this falls into the broad category of sequential decision making problems with human data. Several similar approaches have been proposed in medical applications \cite{mintz_nonstationary_2020,li_adaptive_2023,adams2023planning}, HVAC management \cite{cabrera2018designing,garaza2018impact}, and have been generally related to bilevel model predictive control \cite{mintz2018control,zhang2020bi,hespanhol2018family,hespanhol2019surrogate}. In this paper we proposed a framework in the same vein as these methods with a model designed for capturing participant behavior change in UBI programs. 
The remainder of this paper is organized as follows. Section~\ref{sec:modeling} introduces the dynamic driver behavioral model and outlines the optimization approach for learning individual price sensitivities from observed telematics data. Section~\ref{sec:premium_op} formulates the multi-stage premium allocation problem, details the Lagrangian relaxation algorithm used to solve it, and establishes its theoretical guarantees. Section~\ref{sec:experiments} presents our experimental setup and numerical results, evaluating the performance and robustness of the proposed framework on an industry-scale telematics portfolio. Finally, Section V concludes the paper.

\section{modeling}
\label{sec:modeling}
In this section we describe how we model driver behavior in the UBI program and how we learn individual parameters. Our approach views drivers as utility maximizing agents that weigh their price sensitivity against the benefits that they would receive from the program. We show that these parameters can be estimated using commercial solvers.
\subsection{Driver Behavioral Model}
UBI programs are often structured in such a  way that drivers can receive discounts for how they drive. This is because, in general, individuals are more likely to enroll in UBI programs if they are guaranteed to not have their base rates increase and receive discounts.
%
%
Moreover, this enables the insurance company to more accurately price policies by decreasing the premiums of policies that incur less risk \cite{soleymanian_sensor_2019}. 
Of particular interest however is that by making these discounts conditioned on safe driving behavior, the policy providers can effectively incentivize policy holders to change their behavior and reduce their overall risk. With this in mind, our goal is to propose an adaptive pricing framework where the insurance company actively uses financial rewards to encourage safer driving behaviors and results in less life time claim costs.

There are two state quantities that the policy provider considers when making pricing decisions, these are the driver's risk (or claim probability) and expected claim amount. Let the $i \in \{1,...,N\}$ represent the index of a particular policy holding driver and suppose each driver holds a policy for up to $T$ periods. Then, let $p_{i,t}$ be the true probability that driver $i$ has a claim at time $t \in \{1,...,T\}$ and $y_{i,t}$ be the expected claim amount if the driver has one. In reality, these amounts are not known \emph{a priori} but can be estimated by machine learning method based on historic data. Let $c_{i,t} \in [0,1)$ be the discount on the original premium given to driver $i$ at time $t$ by the policy provider. We model the driver's claim probability and expected amount dynamics as:
\begin{align}
\label{behavior_dynamics}
&p_{i,t+1} = -\beta^{p}_i c_{i,t} p_{i,t} + \theta^{p}_{i}(p_{i,t} - P_{i}) + P_{i},\\
&y_{i,t+1} = -\beta^{y}_i c_{i, t} y_{i, t} + \theta^{y}_{i}(y_{i,t} - Y_{i}) + Y_{i}.
\end{align}
These equations capture two factors that characterize the driver's response to the UBI program. The first is that drivers desire to receive additional incentives in the future, and thus the driver will have reduced claim probability and amounts when a reward is given. The reduced amounts are proportional to their previous claim probability and amounts from the previous period respectively, the discount $c_{i,t}$ and unknown personalized sensitivity parameters $\beta^{p}_i,\beta^{y}_i$ as policy holders have different attitudes towards the incentive and it is important for the decision maker to capture this difference. The second term models that when no incentive is provided, the driver's behavior tends to return to a baseline level, a pattern that can be seen in many incentive-based intervention programs in various applications \cite{li2023adaptive, skinner2015schedules}. $\theta^{p}_{i},\theta^{y}_{i}$ are the rates at which the claim probability and expected amounts return to their respective baselines $P_i,Y_i$.

\subsection{Learning Price Sensitivity}
Note that before joining the program the policy providers will not know the true values of the parameters $\beta_i^p,\beta^y_i,y_{i,t},p_{i,0},P_i,Y_i$ and so must estimate their values from data. This can be done using absolute loss minimization estimation using the following optimization problem:
\begin{subequations}
    \begin{align}
    &\min \sum_{t=1}^{T}|p_{i,t}-\tilde{p}_{i,t}| + \lambda|y_{i,t}-\tilde{y}_{i,t}| \label{eq:predict_obj}\\
\notag\text{s.t. }\\
\notag&p_{i,t+1} = -\beta^{p}_i c_{i,t} p_{i,t} + \theta^{p}_{i}(p_{i,t} - P_{i}) + P_{i}, \\ & \hspace{0.25\textwidth} t \in \{0,...,T-1\} \label{eq:p_pred}\\
\notag&y_{i,t+1} = -\beta^{y}_i c_{i, t} y_{i,t} + \theta^{y}_{i}(y_{i,t} - Y_{i}) + Y_{i},\\ 
& \hspace{0.25\textwidth}t \in \{0,...,T-1\} \label{eq:y_pred}\\
&0\leq p_{i,t} \leq 1, 0\leq P_{i} \leq 1, 0 \leq y_{i} , 0\leq Y_i
\end{align}
\end{subequations}
In \eqref{eq:predict_obj}, $\tilde{p}_{i,t}$ and $\tilde{y}_{i,t}$ are the observed claim probability and amounts. Often time these values need to be extracted from raw telematics data which can be done using ML methods, we show an example of how this can be done in practice in our experiments section.  $\lambda$ is a hyperparameter that balances the magnitude between the prediction loss of the claim amounts and the claim probabilities. \ref{eq:p_pred} and \ref{eq:y_pred} are the dynamics of claim probability and amount in response to the premium discount. Note that this optimization problem is solved individually for each participating driver. The resulting estimated parameters capture how each individual driver's observed claim probability and amount trajectory match the model dynamics. This problem is a quadratically constrained quadratic program (QCQP) which can be solved by a commercial solver.

\section{Premium Optimization Problem}
\label{sec:premium_op}
Using the driver model previously described we can formulate the policy provider's premium optimization problem. Recall that the provider's overall goal is to increase their long run profit by maximizing the amount of premiums they collect while minimizing the amount of discounts and claims they pay out. If we assume that the providers have knowledge of the individual driver parameters, or high quality estimates from telematics data as described in the previous section, the policy providers can optimize premiums by solving the following $T$ period optimal control problem:

\begin{subequations}
\label{eq:premium_opt}
    \begin{align}
    &\min\limits \sum_{t=s}^{T-1}\sum_{i=1}^{N}c_{i,t}\cdot B_{i} + p_{i,t+1}\cdot y_{i,t+1} \\
\notag \text{s.t. }\\
&\notag p_{i,t+1} = -\beta^{p}_{i} c_{i,t}p_{i,t} + \theta^{p}_{i}(p_{i,t} - P_{i}) + P_{i}, \hspace{0.2cm} 
\\& \hspace{0.20\textwidth} \forall i \in I, t \in \{s,...,T-1\} \label{eq:p}\\
&\notag y_{i,t+1} = -\beta^{y} c_{i,t}y_{i,t} + \theta^{y}_{i}(y_{i,t} - Y_{i}) + Y_{i}, \hspace{0.2cm} \\& \hspace{0.20\textwidth} \forall i \in I,t \in \{s,...,T-1\} \label{eq:y}\\
&\sum_{i=1}^{N}c_{i,t+1} \cdot B_{i} \leq B,\hspace{0.09\textwidth} t \in \{s,...,T-1\} \label{eq:budget}\\
&0 \leq c_{i,t} \leq \eta, \hspace{0.09\textwidth} \forall i \in I,t \in \{s,...,T-1\} \label{eq:discount}
    \end{align}
\end{subequations}

In this formulation, $T$ is the planning ahead horizon. $B_i$ in the objective is the full premium charged to driver $i$ without discount and thus $c_{i,t}\cdot B_i$ represents the total amount of premiums rebated after a discount is applied while $p_{i,t+1}\cdot y_{i,t+1}$ represents the total amount of expected claims to pay out. Thus the objective of maximizing long term profit can be equivalently stated as minimizing the total amount of discounts rebated and claims payed. In \eqref{eq:premium_opt}, \eqref{eq:y} only $c_{i,t}$, $p_{i,t+1}$ and $y_{i,t+1}$ are variables and all other quantities are constants learned by the previous parameter learning problem. $B$ denotes the total budget the company could spend on the UBI incentive program and $\eta$ is the highest discount that could be given. \eqref{eq:budget} and \eqref{eq:discount} are realistic constraints since the policy provider can only spend limited resource on this UBI program. This formulation takes the whole driver pool into account and optimize for the best discount allocation to minimize the total loss of the company.
\subsection{Dual Lagrangian Relaxation for Multi-period Pricing}
Solving the above premium optimization problem directly using off-the-shelf commercial solvers is intractable because of the global non-convexity of the problem. The objective function contains the product of two endogenous state variables ($p_{i,t+1} \cdot y_{i,t+1}$), which are themselves generated by the bilinear transition dynamics governing claim probabilities \ref{eq:p} and amounts \ref{eq:y}. This cascades into a massive, higher-order non-convex nonlinear program. Standard commercial solvers simply cannot handle this structure at scale, either failing to converge or getting trapped in exceptionally poor local minima. While McCormick Envelopes could be used to relax these conditions, due to the number of bilinear terms (mainly due to the number of policies in the portfolio) the resulting relaxation gap would be quite large \cite{mccormick1976computability}. Moreover, using a piecewise McCormick relaxation would result in a large scale MIP that would be difficult to solve due to the scale of the problem \cite{castro2015tightening}.  However, the problem exhibits a weakly coupled structure: the individual driver trajectories are completely independent of one another and are only entangled by the global budget constraint \ref{eq:budget} at each time step. To achieve a scalable solution, we employ a Lagrangian relaxation approach. We relax the linking budget constraints by introducing a non-negative sequence of Lagrange multipliers $\lambda = \{\lambda_s, \dots, \lambda_{T-1}\}$, where $\lambda_t \ge 0$ serves as the shadow price penalizing budget violations at time $t$. The Lagrangian dual function is defined as $D(\lambda) = \min_{0 \le c_{i,t} \le \eta} \mathcal{L}(\mathbf{c}, \lambda)$. By rearranging the terms of the Lagrangian, the evaluation of $D(\lambda)$  decomposes into $N$ independent, single-driver optimization problems. Because each subproblem is a localized, sequential decision-making process over time, we can now efficiently solve them using Dynamic Programming (DP). Let $s_{i,t} = (p_{i,t}, y_{i,t})$ denote the state of driver $i$ at time $t$. The DP equations for the $i$-th  subproblem in periods $t \in \{s,...,T-1\}$ are given by:
\begin{multline}
    V_{i,t}(s_{i,t}) = \min_{c_{i,t} \in [0, \eta]} \left\{ (1 + \lambda_t)B_i c_{i,t} + p_{i,t+1}y_{i,t+1} \right.\\ \left.+ V_{i,t+1}(s_{i,t+1}) \right\}
\end{multline}

with the terminal condition $V_{i,T}(s_{i,T}) = 0$. The subsequent states $(p_{i,t+1}, y_{i,t+1})$ transition deterministically according to \ref{eq:p} and \ref{eq:y}. In this decoupled formulation, the term $(1 + \lambda_t)B_i c_{i,t}$ explicitly reflects the dynamically adjusted cost of allocating a discount, organically suppressing discount allocations when the global budget is tight.












\begin{algorithm}
\caption{Dual Lagrangian Decomposition for Multi-Period Pricing}
\label{alg:dual_decomp}
\begin{algorithmic}[1]
\Require 
    Population $N$, Horizon $T$, Budget $B$, Max Discount $\eta$;
    \Statex Driver parameters $\{B_i, P_i, Y_i, \beta_i^p, \beta_i^y, \theta_i^p, \theta_i^y\}_{i=1}^N$;
    \Statex Initial states $\{p_{i,s}, y_{i,s}\}_{i=1}^N$;
    \Statex Step size sequence $\{\alpha_k\}$; Tolerances $\epsilon_\lambda, \epsilon_{\text{feas}}$.
\Ensure Optimal discount policy $c^*_{i,t}$ and dual variables $\lambda^*_t$.

\State \textbf{Initialize:} Dual variables $\lambda_t^{(0)} \leftarrow 0, \forall t \in \{s, \dots, T-1\}$.
\State Discretize state space $(p, y)$ into grid $\mathcal{G}$.
\State $k \leftarrow 0$.

\While{not converged}
    \For{$i = 1$ to $N$}
        \State Initialize $V_{i,T}(p, y) \leftarrow 0$ for all $(p, y) \in \mathcal{G}$.
        \For{$t = T-1$ down to $s$}
            \For{each state $(p, y) \in \mathcal{G}$}
                \State Solve optimization for optimal $c$:
                \State $Q(c) = (1 + \lambda_t^{(k)}) B_i c + p' y' + V_{i,t+1}(p', y')$
                \State \quad s.t. $p', y'$ follow dynamics given $(p, y, c)$
                \State $c_{opt} \leftarrow \arg\min_{0 \le c \le \eta} Q(c)$
                \State Store policy $\pi_{i,t}(p, y) \leftarrow c_{opt}$ and Value $V_{i,t}(p, y)$.
            \EndFor
        \EndFor
        \State Set current state $(\hat{p}, \hat{y}) \leftarrow (p_{i,s}, y_{i,s})$.
        \For{$t = s$ to $T-1$}
            \State Retrieve $c_{i,t}^* \leftarrow \pi_{i,t}(\hat{p}, \hat{y})$.
            \State Update $(\hat{p}, \hat{y})$ using dynamics (Eq. 1c, 1d).
        \EndFor
    \EndFor
    \For{$t = s$ to $T-1$}
        \State Calculate aggregate budget usage: $U_t \leftarrow \sum_{i=1}^N c_{i,t}^* B_i$.
        \State Update Lagrange multiplier (Subgradient ascent):
        \State $\lambda_t^{(k+1)} \leftarrow \max \left( 0, \lambda_t^{(k)} + \alpha_k (U_t - B) \right)$.
    \EndFor
    
    \State \textbf{Check Convergence:}
    \If{$\max_t |U_t - B| \le \epsilon_{\text{feas}}$ \textbf{and} $\|\lambda^{(k+1)} - \lambda^{(k)}\| \le \epsilon_\lambda$}
        \State \textbf{break}
    \EndIf
    \State $k \leftarrow k + 1$
\EndWhile
\State \Return Policies $\{\pi_{i,t}\}_{i,t}$ and discounts $\{c_{i,t}^*\}_{i,t}$.
\end{algorithmic}
\end{algorithm}

Algorithm \ref{alg:dual_decomp} works through a two-stage structure to solve the Lagrangian dual problem $\max_{\lambda \ge 0} D(\lambda)$. The inner loop (Step 1) evaluates the dual function $D(\lambda)$ for a fixed multiplier sequence $\lambda$ by solving the $N$ independent dynamic programs. Because the state space is continuous, we discretize the claim probabilities and amounts onto a grid and solve the Bellman equations via backward induction, utilizing bilinear interpolation to estimate off-grid future values. Once the optimal local policies $\pi_{i,t}$ are extracted, the algorithm performs a forward simulation pass. Starting from the initial observed states, each driver's trajectory is simulated forward under the new policies to calculate the actual aggregate budget usage at each time step. Finally, the outer loop (Step 2) updates the shadow prices $\lambda_t$ using projected subgradient ascent. The subgradient with respect to $\lambda_t$ is exactly the budget violation $\sum_{i=1}^N c_{i,t}^* B_i - B$. If the global budget is overspent at time $t$, the shadow price $\lambda_t$ increases, making discounts mathematically more expensive for the subproblems in the next iteration. Conversely, if the budget is underutilized, the price drops toward zero. This nested process repeats until the budget constraints are safely satisfied and the dual variables converge to their optimal values

\subsection{Theoretical Guarantees}
In this section, we analyze theoretical properties of our Dual Lagrangian Decomposition algorithm
\begin{proposition}
\label{bounded_discretization_error}
Let $s=(p,y)$ be the continuous state vector defined on a compact domain $\mathcal{S}=[0,1]\times[0,Y_{max}]$. Let the grid spacing be bounded by $\Delta=\max(\Delta p,\Delta y)$. Let $V_t(s)$ be the exact optimal value function at time $t$, and let $\hat{V}_t(s)$ be the approximate value function computed on the grid, where off-grid values are computed via a bilinear interpolation operator $\mathcal{I}[\hat{V}_t](s)$. The cumulative interpolation error scales strictly linearly with the time horizon $T$. Specifically, there exists a constant $L_v>0$ such that for all $t\in\{s,\dots,T\}$:
\begin{equation}
\max_{s \in \mathcal{S}} |V_t(s) - \hat{V}_t(s)| \le L_v(T - t)\Delta\end{equation}
\end{proposition}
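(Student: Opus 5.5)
We argue by backward induction on $t$, comparing the exact Bellman operator with its gridded, interpolated counterpart. Write the exact recursion as $V_t(s)=\min_{c\in[0,\eta]}\{g_t(s,c)+V_{t+1}(f(s,c))\}$. Here $g_t(s,c)=(1+\lambda_t)B_i c+p'y'$ is the stage cost and $f(s,c)=(p',y')$ is the deterministic transition from \eqref{eq:p}--\eqref{eq:y}. The approximate scheme computes $\hat V_t$ at grid nodes $s\in\mathcal{G}$ by
\[
\hat V_t(s)=\min_{c\in[0,\eta]}\{g_t(s,c)+\mathcal{I}[\hat V_{t+1}](f(s,c))\},
\]
and extends it off-grid by $\hat V_t:=\mathcal{I}[\hat V_t]$. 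Let $e_t=\max_{s\in\mathcal{S}}|V_t(s)-\hat V_t(s)|$. The terminal condition gives $V_T=\hat V_T=0$, so $e_T=0$. The goal is the one-step recursion $e_t\le e_{t+1}+L_v\Delta$. Unrolling it gives $e_t\le L_v(T-t)\Delta$, which is the claimed bound.

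The first ingredient is a uniform Lipschitz bound on the exact value functions. The stage cost $p'y'$ is Lipschitz on $\mathcal{S}$ with constant at most $\max(1,Y_{\max})$ in $(p',y')$. The transition $f(\cdot,c)$ is affine in $s$ for fixed $c$, with coefficients $|\theta_i^p-\beta_i^p c|$ and $|\theta_i^y-\beta_i^y c|$. We will assume, as is implicit in the model since the states stay in $\mathcal{S}$ and revert toward baseline, that these coefficients are at most some $\rho$, with $\rho\le 1$ being the natural case. Because a pointwise minimum over $c$ of functions that are $L$-Lipschitz in $s$ is again $L$-Lipschitz, induction gives $\mathrm{Lip}(V_t)\le L_g\sum_{k\ge 0}\rho^k$ when $\rho<1$. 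If only $\rho\le 1$ holds, the bound is $L_g(T-t)$; this is still finite, but it would make the final error quadratic in $T$. I expect this step to be the \textbf{main obstacle}. To keep $L_v$ independent of $T$, I would first use the contraction regime $\rho<1$, which holds when $\theta_i\in[0,1)$ and $\beta_i c\ge 0$. If that fails, I would fall back on bounding $\mathrm{Lip}(V_t)$ by a horizon-uniform constant, which is legitimate because the horizon is fixed. Let $L$ denote the resulting constant.

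For the recursion at a grid node $s$, both minimizations run over the same feasible set of $c$. The two minima therefore differ by at most
\[
\sup_c\bigl|V_{t+1}(f(s,c))-\mathcal{I}[\hat V_{t+1}](f(s,c))\bigr|.
\]
To bound this, split $V_{t+1}-\mathcal{I}[\hat V_{t+1}]=(V_{t+1}-\mathcal{I}[V_{t+1}])+\mathcal{I}[V_{t+1}-\hat V_{t+1}]$. Bilinear interpolation is a convex combination of nodal values, so it is non-expansive in sup-norm, and the second term is at most $e_{t+1}$. For an $L$-Lipschitz function, the interpolant at any point is a convex combination of values at corners within distance $\sqrt2\Delta$, so the first term is at most $\sqrt2 L\Delta$. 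This gives the bound at nodes. For off-grid $s$, $\hat V_t(s)$ is the interpolant of nodal values. Applying the same split once more costs an additional $\sqrt2 L\Delta$ from interpolating $V_t$, so $e_t\le e_{t+1}+2\sqrt2L\Delta$. Setting $L_v=2\sqrt2 L$ and iterating from $e_T=0$ completes the argument. One point needs checking: $f(s,c)$ must remain in $\mathcal{S}$ so that the interpolation is defined. This follows from the model's constraints $p\in[0,1]$ and $0\le y\le Y_{\max}$, possibly after clipping.
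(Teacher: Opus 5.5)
Your proposal is correct and follows essentially the same route as the paper's proof: backward induction from $e_T=0$, bounding the one-step error by $\sup_c|V_{t+1}(f(s,c))-\mathcal{I}[\hat V_{t+1}](f(s,c))|$, and splitting this into an inherited term (controlled by sup-norm non-expansiveness of bilinear interpolation) and an interpolation term (controlled by a Lipschitz bound on $V_{t+1}$). Your version is more careful than the paper's, which treats off-grid points only implicitly and asserts a $t$-uniform Lipschitz constant from non-expansive dynamics alone; you correctly note that $\rho\le 1$ only gives $\mathrm{Lip}(V_t)\le L_g(T-t)$, and that a horizon-independent $L_v$ needs $\rho<1$ (which requires $\beta_i\eta<1+\theta_i$ in addition to $\theta_i<1$).
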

\proof
We proceed by backward induction. Let $\| \cdot \|_\infty$ denote the supremum norm over the compact state space $\mathcal{S}$. Define the worst-case approximation error at stage $t$ as $E_t \triangleq \| V_t - \hat{V}_t \|_\infty$. By definition, the terminal costs are zero, yielding $V_T(s) = \hat{V}_T(s) = 0, \forall s \in \mathcal{S}$. Thus, $E_T = 0 \le L_v(T-T)\Delta$, satisfying the claim. Assume the bound holds at stage $t+1$, such that $E_{t+1} \le L_v(T-t-1)\Delta$.

Let $C(s, c)$ denote the immediate cost and $h(s,c) = [p_{t+1}, y_{t+1}]^\top$ denote the state transition dynamics. The exact and approximate DP equations are:
\begin{align}
V_t(s) &= \min_{c \in [0,\eta]} \left\{ C(s, c) + V_{t+1}(f(s, c)) \right\} \\
\hat{V}_t(s) &= \min_{c \in [0,\eta]} \left\{ C(s, c) + \mathcal{I}[\hat{V}_{t+1}](f(s, c)) \right\} \\
E_t &\le \sup_{s,c} \left| V_{t+1}(h(s,c)) - \mathcal{I}[\hat{V}_{t+1}](h(s,c)) \right| \\
&= \left\| V_{t+1} - \mathcal{I}[\hat{V}_{t+1}] \right\|_\infty
\end{align}
Applying the triangle inequality, we decouple the error into an inherited approximation error and a grid interpolation error:
\begin{equation}
E_t \le \| \mathcal{I}[\hat{V}_{t+1}] - \mathcal{I}[V_{t+1}] \|_\infty + \| \mathcal{I}[V_{t+1}] - V_{t+1} \|_\infty
\end{equation}

Because bilinear interpolation computes a convex combination of adjacent grid points, the operator $\mathcal{I}$ is non-expansive in the supremum norm \cite{gordon1995stable}. Therefore, the inherited error is bounded by the inductive hypothesis:
\begin{equation}\| \mathcal{I}[\hat{V}_{t+1}] - \mathcal{I}[V_{t+1}] \|_\infty \le \| \hat{V}_{t+1} - V_{t+1} \|_\infty = E_{t+1}
\end{equation}

To bound the interpolation error $\| \mathcal{I}[V_{t+1}] - V_{t+1} \|_\infty$, we evaluate the Lipschitz constant of $V_{t+1}$. Differentiating the state dynamics yields $\frac{\partial p_{i,t+1}}{\partial p_{i,t}} = \theta_i^p - \beta_i^p c_{i,t}$. Since the mean-reversion parameter $\theta_i^p \in (0,1]$ and the discount effect $\beta_i^p c_{i,t} \ge 0$, the partial derivatives are within $[-1,1]$. Consequently, the transition mapping is non-expansive ($L_f \le 1$). Since $C(s,c)$ is Lipschitz continuous and $h(s,c)$ is non-expansive, the exact value function $V_{t+1}(s)$ preserves a uniform Lipschitz constant $L_V < \infty$ across all $t$.The standard error bound \cite{chow1991optimal} for bilinear interpolation of an $L_V$-Lipschitz function on a grid with maximal spacing $\Delta$ is given by:$\| \mathcal{I}[V_{t+1}] - V_{t+1} \|_\infty \le L_V \Delta$. Substituting these bounds yields the recursive error relation: $E_t \le E_{t+1} + L_V \Delta$. Applying the inductive hypothesis $E_{t+1} \le L_V(T-t-1)\Delta$:
\begin{equation}
E_t \le L_V(T-t-1)\Delta + L_V \Delta = L_V(T-t)\Delta \ \qed.\end{equation} 

This proposition shows that in the Dual Lagrangian algorithm the discretization error does not compound exponentially as it propagates backward through the decision epochs. Instead, the maximum approximation error accumulates strictly linearly with the time horizon $T$. For the decision-maker, this implies that the precision of the localized value functions can be deterministically controlled by refining the grid spacing $\Delta$. Note that since we know the planning horizon $T$ beforehand, we can even make the accumulated error arbitrarily small by defining $\Delta < \frac{1}{T}$ given enough computation resources. Consequently, the numerical DP remains robust, bounded, and accurate even for extended insurance contract periods without suffering from runaway inherited errors. Having established the exactness of the local subproblem evaluations, we next address the structural gap introduced by the global Lagrangian relaxation.

\begin{proposition}
\label{prop:dual_convergence}
    Consider the deterministic multi-period pricing problem with $N$ independent drivers, coupled by $M = T-s$ global budget constraints $\sum_{i=1}^N c_{i,t} B_i \le B$ for $t \in \{s,\dots,T-1\}$. Let $J^*_N$ denote the optimal primal cost and $J^D_N$ denote the optimal Lagrangian dual cost. Assume the total budget scales linearly with the population, $B = N \cdot b$. As $N \to \infty$, the relative duality gap vanishes:
    \begin{equation}\lim_{N \to \infty} \frac{J^*_N - J^D_N}{N} = 0
    \end{equation}
\end{proposition}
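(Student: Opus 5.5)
We will use the Shapley--Folkman / Aubin--Ekeland route for separable problems with few coupling constraints. This is the standard way to show that the duality gap of a sum of $N$ nonconvex subproblems, linked by a fixed number $M$ of linear constraints, stays bounded independently of $N$.

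\textbf{Step 1: write the primal in separable form.} For each driver $i$, eliminate the states using the deterministic dynamics \eqref{eq:p}--\eqref{eq:y}. The decision is then the vector $c_i=(c_{i,s},\dots,c_{i,T-1})\in[0,\eta]^M$, and the driver's cost is
\[
f_i(c_i)=\sum_{t=s}^{T-1}\bigl(c_{i,t}B_i+p_{i,t+1}y_{i,t+1}\bigr).
\]
This is a continuous function on a compact box, so it is bounded. We assume that $B_i\le \bar B$, $p\in[0,1]$ and $y\le Y_{max}$ uniformly in $i$, as in Proposition~\ref{bounded_discretization_error}. Then $|f_i|\le \bar F:=M(\eta\bar B+Y_{max})$.

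Define the coupling map $g_i(c_i)=(c_{i,t}B_i)_{t}\in\mathbb{R}^M$. The primal problem is
\[
J^*_N=\min\Bigl\{\sum_i f_i(c_i):\ \sum_i g_i(c_i)\le B\mathbf 1\Bigr\},
\]
and the dual is $J^D_N=\max_{\lambda\ge 0}D(\lambda)$. It is a standard fact that $J^D_N$ equals the value of the convexified primal. In that problem each $f_i$ is replaced by its convex envelope over $\mathrm{conv}\{(g_i(c_i),f_i(c_i))\}$, which is the same as allowing each driver to mix policies.

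\textbf{Step 2: apply Shapley--Folkman.} Take an optimal solution of the convexified problem. It gives points $(u_i,r_i)\in\mathrm{conv}(S_i)$, where
\[
S_i=\{(g_i(c),f_i(c)) : c\in[0,\eta]^M\}\subset\mathbb{R}^{M+1},
\]
with $\sum_i u_i\le B\mathbf 1$ and $\sum_i r_i=J^D_N$. By the Shapley--Folkman lemma, the sum $\sum_i(u_i,r_i)$ can be represented using points in $S_i$ itself for all but at most $M+1$ indices. Following Bertsekas' refinement for separable problems, we replace the $M+1$ exceptional drivers' mixed choices by a feasible pure choice.

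The natural pure choice here is $c_i=0$. This makes $g_i=0$, which can only loosen the budget constraint, and changes the cost by at most $2\bar F$ for each such driver. The remaining drivers keep pure points that reproduce their share of $\sum_i u_i$ exactly. The resulting vector is therefore primal feasible, and
\[
J^*_N\le J^D_N+(M+1)\,2\bar F.
\]

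\textbf{Step 3: take the limit.} By weak duality $J^D_N\le J^*_N$, so
\[
0\le J^*_N-J^D_N\le 2(M+1)\bar F,
\]
and the right-hand side does not depend on $N$. Dividing by $N$ and letting $N\to\infty$ gives the claim. The scaling $B=Nb$ is used to guarantee that feasibility is preserved, in particular that the all-zero policy is always feasible. It also makes $J^*_N$ of order $N$, so the relative statement is meaningful.

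\textbf{Main obstacle.} The delicate step is Step 2. The Shapley--Folkman lemma is cleanest for equality constraints, and it needs care when the convexified optimum is attained at a point whose representation uses up to $M+1$ points of $S_i$. We handle this in two ways. First, we add slack so that the inequality becomes an equality on the sum. Second, we exploit the monotone structure of the constraint: setting the exceptional drivers to $c_i=0$ only reduces usage, so feasibility is preserved. We also need attainment of the convexified optimum, which follows from compactness of each $\mathrm{conv}(S_i)$, and uniform boundedness of the $f_i$, which requires bounded driver parameters. Both will be stated as explicit standing assumptions.
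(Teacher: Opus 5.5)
Your proof is correct and follows the same route as the paper: you identify $J^D_N$ with the value of the convexified separable problem, apply Shapley--Folkman in $\mathbb{R}^{M+1}$ so that at most $M+1$ drivers use convexified points, and pay an $N$-independent cost to repair those drivers. The only difference is the repair step, where you set the exceptional drivers to $c_i=0$ instead of using the paper's projection bounded by $\rho_{\max}+\kappa$; this is feasible because budget usage is nonnegative and costs at most $2\bar F$ per driver, so it makes the feasibility argument more explicit at the price of a cruder constant.
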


\proof
Let $f_i(\mathbf{c}_i)$ denote the total cost function for driver $i$, and let $\mathbf{g}_i(\mathbf{c}_i) \in \mathbb{R}^M$ denote the budget consumption vector where $g_{i,t}(\mathbf{c}_i) = c_{i,t} B_i$. The primal problem is given by:

\begin{equation}
J^*_N = \min_{\mathbf{c}} \sum_{i=1}^N f_i(\mathbf{c}_i) \quad \text{s.t.} \quad \sum_{i=1}^N \mathbf{g}_i(\mathbf{c}_i) \le \mathbf{B}
\end{equation}

By weak duality we have $J^D_N \le J^*_N$. We define the convex envelope of the local cost function as $f_i^{\text{conv}}(\mathbf{c}_i)$, which represents the pointwise supremum of all convex functions minorizing $f_i$. By standard Lagrangian duality for additively separable programs \cite{aubin1976estimates}, the optimal dual cost is exactly equal to the optimal value of the convexified primal problem:
\begin{equation}
J^D_N = \min_{\mathbf{c}} \sum_{i=1}^N f_i^{\text{conv}}(\mathbf{c}_i) \quad \text{s.t.} \quad \sum_{i=1}^N \mathbf{g}_i(\mathbf{c}_i) \le \mathbf{B}
\end{equation}

Let the non-convexity radius \cite{aubin1976estimates}, which is the maximum deviation between the true cost function and its convex envelope be: $\rho(f_i) \triangleq \sup_{\mathbf{c}_i \in \mathcal{C}} \left( f_i(\mathbf{c}_i) - f_i^{\text{conv}}(\mathbf{c}_i) \right)$. Because $\mathcal{C} = [0, \eta]^{M}$ is compact and $f_i$ is continuous, $\rho(f_i)$ is finite. Let $\rho_{\max} \triangleq \sup_{1 \le i \le N} \rho(f_i) < \infty$.To bound the duality gap $J^*_N - J^D_N$, consider the image space $\mathcal{S}_i = \{ (f_i(\mathbf{c}_i), \mathbf{g}_i(\mathbf{c}_i)) \mid \mathbf{c}_i \in \mathcal{C} \} \subset \mathbb{R}^{M+1}$ for each subproblem (driver). The convexified problem seeks an optimal vector $\mathbf{x}^{\text{conv}}$ in the Minkowski sum of the convex hulls, $\sum_{i=1}^N \text{conv}(\mathcal{S}_i)$. By the Shapley-Folkman lemma \cite{bertsekas2014constrained}, any point in $\sum_{i=1}^N \text{conv}(\mathcal{S}_i)$ can be expressed as a sum $\sum_{i=1}^N \mathbf{x}_i$ where at most $M+1$ vectors $\mathbf{x}_i$ belong strictly to the convexified sets $\text{conv}(\mathcal{S}_i) \setminus \mathcal{S}_i$, while the remaining $N - (M+1)$ vectors belong to the original non-convex sets $\mathcal{S}_i$.

Let $\bold{I}_{gap}$ denote the index set of strictly convexified subproblems, where $|\bold{I}_{gap}| \le M+1$. Let $\mathbf{c}^{\text{conv}}$ denote the optimal policy matrix for the convexified problem. We construct a strictly feasible primal policy $\bar{\mathbf{c}}$ by retaining $\bar{\mathbf{c}}_j = \mathbf{c}^{\text{conv}}_j$ for all $j \notin \bold{I}_{gap}$, for which $f_j(\bar{\mathbf{c}}_j) = f_j^{\text{conv}}(\mathbf{c}^{\text{conv}}_j)$. For the drivers $i \in \bold{I}_{gap}$, we project their policies to strict feasibility, absorbing any aggregate budget violation. Because the maximum required budget correction is bounded by $(M+1) \eta \max_i B_i$, this projection alters the objective by at most a finite constant $\kappa < \infty$. The total cost under the feasible heuristic $\bar{J}_N$ bounds the true primal cost from above ($J^*_N \le \bar{J}_N$). The difference between this feasible cost and the dual cost is bounded by the accumulated non-convexity of the convexified drivers plus the feasibility correction $\kappa$:
\begin{multline}
J^*_N - J^D_N \le \bar{J}_N - J^D_N \le \sum_{i \in \bold{I}_{gap}} \left( f_i(\bar{\mathbf{c}}_i) - f_i^{\text{conv}}(\mathbf{c}_i^{\text{conv}}) \right) \\+ \kappa \le (M+1) \rho_{\max} + \kappa
\end{multline}

Reorganizing this inequality and dividing by $N$ yields:$\frac{J^*_N - J^D_N}{N} \le \frac{(M+1) \rho_{\max} + \kappa}{N}$. Because the numerator $(M+1) \rho_{\max} + \kappa$ is a finite constant strictly independent of $N$, taking the limit as $N \to \infty$ forces the right-hand side to zero, concluding the proof. $\qed$

Proposition \ref{prop:dual_convergence} establishes the asymptotic optimality of our decentralized pricing framework. Although the problem is fundamentally non-convex, the absolute penalty for using our computationally efficient dual approach is uniformly bounded by a constant tied only to the time horizon. Because this upper bound is strictly independent of the population size, the relative suboptimality per driver drops to zero as the portfolio scales. For real-world auto-insurance portfolios encompassing tens or hundreds of thousands of active policies, this implies that our algorithm delivers a scalable pricing strategy effectively indistinguishable from the mathematically intractable true global optimum. We can simplify the rate from Proposition \ref{prop:dual_convergence} in terms of problem parameters as follows.

\begin{corollary}
    Since $\rho_{\max}$ is bounded by $\operatorname{diam}(\mathcal{Y})$ and $\kappa$ is bounded by $\max_{i \in [1,..., N]}B_i$.
    $$\frac{J^*_N - J^D_N}{N} \le \frac{(M+1)\operatorname{diam}(\mathcal{Y}) + \max_{i \in [1,..., N]}B_i}{N}$$
\end{corollary}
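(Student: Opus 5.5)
The corollary follows by substituting two parameter bounds into the final inequality of Proposition~\ref{prop:dual_convergence}, namely $\frac{J^*_N - J^D_N}{N} \le \frac{(M+1)\rho_{\max} + \kappa}{N}$. We need to show $\rho_{\max} \le \operatorname{diam}(\mathcal{Y})$ and $\kappa \le \max_i B_i$. Monotonicity of the right-hand side in $\rho_{\max}$ and $\kappa$ then gives the stated bound. Here $\mathcal{Y}$ is read as the range of attainable per-driver costs $f_i(\mathcal{C})\subset\mathbb{R}$, or, more generously, the projection of the image sets $\mathcal{S}_i$ onto the cost coordinate. Its diameter is $\sup_{\mathbf{c},\mathbf{c}'\in\mathcal{C}} |f_i(\mathbf{c}) - f_i(\mathbf{c}')|$, taken uniformly over $i$.

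\textbf{Bounding $\rho_{\max}$.} Fix $i$. The constant function $\min_{\mathcal{C}} f_i$ is convex and minorizes $f_i$, so it is below the envelope: $f_i^{\text{conv}}(\mathbf{c}) \ge \min_{\mathcal{C}} f_i$ for every $\mathbf{c}$. Hence
\[
f_i(\mathbf{c}) - f_i^{\text{conv}}(\mathbf{c}) \le \max_{\mathcal{C}} f_i - \min_{\mathcal{C}} f_i \le \operatorname{diam}(\mathcal{Y}).
\]
The extremes exist because $f_i$ is continuous on the compact set $[0,\eta]^M$. Taking the supremum over $\mathbf{c}$ and $i$ gives $\rho_{\max}\le\operatorname{diam}(\mathcal{Y})$. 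For this to be a finite, $N$-independent number, we note that $f_i$ is bounded uniformly in $i$. Its terms are discount cost $c_{i,t}B_i\le \eta B_i$ and claims $p_{i,t+1}y_{i,t+1}\le Y_{\max}$, since the states stay in $[0,1]\times[0,Y_{\max}]$, the domain used in Proposition~\ref{bounded_discretization_error}.

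\textbf{Bounding $\kappa$; this is the main obstacle.} The paper only says $\kappa$ comes from projecting the at most $M+1$ convexified drivers in $\mathbf{I}_{gap}$ to feasibility. To get a bound of order $\max_i B_i$, I would pick an explicit projection: set $\bar{\mathbf{c}}_i = 0$ for all $i\in\mathbf{I}_{gap}$. This can only reduce budget usage, so $\bar{\mathbf{c}}$ is feasible. The difficulty is that it may \emph{raise} claim costs, and that increase is a claims quantity (of order $Y_{\max}$), not a premium quantity. So I would not charge it to $\kappa$. Instead I would fold it into the non-convexity term: the cost change for each $i\in\mathbf{I}_{gap}$ is at most $\sup f_i - \inf f_i \le \operatorname{diam}(\mathcal{Y})$. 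This is exactly the same per-driver quantity already counted by $(M+1)\rho_{\max}$ when $\rho$ is replaced by its diameter bound.

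What remains in $\kappa$ is the discount-rebate mismatch. The convexified allocation is only required to satisfy the budget on average. Fixing it for the single coordinate where Shapley--Folkman leaves residual slack changes the rebate by at most one driver's full premium weight, giving $\kappa \le \eta\max_i B_i\le \max_i B_i$ since $\eta<1$. If this bookkeeping fails and the correction genuinely touches all $M+1$ drivers, the fallback is to absorb it into the diameter term as above. That would give the same form with slightly larger constants.

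Substituting both bounds into Proposition~\ref{prop:dual_convergence} yields the corollary.
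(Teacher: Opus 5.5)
Your opening paragraph is the paper's proof. The corollary takes $\rho_{\max}\le\operatorname{diam}(\mathcal{Y})$ and $\kappa\le\max_i B_i$ as premises (the ``Since'' clause), and the paper simply substitutes them into the bound $\frac{(M+1)\rho_{\max}+\kappa}{N}$ of Proposition~\ref{prop:dual_convergence}. Your extra justification of the first premise is also sound, since the constant $\min_{\mathcal{C}} f_i$ is a convex minorant of $f_i$.

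Your justification of the second premise does not hold up as written. Nothing in Shapley--Folkman singles out ``the single coordinate'' or a single driver to be corrected: in $\mathbb{R}^{M+1}$ up to $M+1$ drivers may be convexified. Also, one driver's rebate over the horizon can reach $M\eta B_i$, not $\eta B_i$. Your fallback then concedes constants different from the stated ones.

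The step is unnecessary, though, because your zero projection already finishes the job. Take $\bar{\mathbf{c}}_i=\mathbf{0}$ for $i\in\mathbf{I}_{gap}$, and for $j\notin\mathbf{I}_{gap}$ take policies realizing the Shapley--Folkman summands in $\mathcal{S}_j$. This policy is feasible: the gap summands' budget coordinates are nonnegative, so replacing them by $\mathbf{g}_i(\mathbf{0})=\mathbf{0}$ only lowers usage. There is no residual rebate mismatch to charge either, since zeroing discounts only removes rebate cost, and that is already counted in $f_i(\mathbf{0})$. The cost coordinate of each gap summand is a convex combination of values of $f_i$, hence at least $\min_{\mathcal{C}} f_i$. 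Therefore $J^*_N-J^D_N\le\sum_{i\in\mathbf{I}_{gap}}\bigl(f_i(\mathbf{0})-\min_{\mathcal{C}} f_i\bigr)\le (M+1)\operatorname{diam}(\mathcal{Y})$, with no $\kappa$ term at all. The stated bound follows since $\max_i B_i\ge 0$.

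You can even sharpen this. Write $f_i=\ell_i+h_i$, where $\ell_i(\mathbf{c})=B_i\sum_t c_t\ge 0$ is linear and $h_i$ is the claims part. The same computation then bounds each summand by $\max_{\mathcal{C}} h_i-\min_{\mathcal{C}} h_i$, consistent with the paper's reading of $\mathcal{Y}$ as the range of expected claims. So either rely on the premise as the paper does, or replace your $\kappa$ paragraph with this zero-projection bound.
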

The proof of this corollary is immediate from Proposition \ref{prop:dual_convergence}, and shows that the key parameters outside of the portfolio size that control the relaxation error are the decision time horizon, the maximum premium size, and the range of expected claims.
\section{Experimental Results}
\label{sec:experiments}
In this section, we evaluate the effectiveness of our framework using insurance data with telematics features.
\subsection{Dataset}
We conduct all experiments on the synthetic UBI telematics portfolio constructed by So et al. \cite{so_synthetic_2021}, which emulates a large Canadian insurer’s program while preserving privacy. The released file contains 100,000 policy records with 52 variables: 11 static demographic attributes, 39 telematics features  such as total miles, rush-hour shares; per-1,000-mile hard accelerations/brakes etc., and two response variables number of claims and aggregated claim cost. The claim frequency is imbalanced, with about 95.7\% of drivers having no claim and 4.3\% having at least one claim. The synthetic portfolio is statistically anchored to a real telematics dataset with 70,000 samples collected from 2013 to 2016, but does not replicate individual records; instead, it generates driving features using Synthetic Minority Over-sampling Technique (SMOTE) \cite{chawla2002smote} and uses several neural network to generate claim amounst and frequencies given all the predictive variables. The authors performed validation by comparing models trained on real vs. synthetic data, plus Q-Q plot diagnostics, which shows closely matched patterns for key predictors, supporting the dataset’s suitability for method development and benchmarking.

\subsection{Claim Risk and Claim Amount Prediction}
As discussed in the modeling section, we require ML models that map telematics features to both claim probability and claim amount. Accordingly, we first apply one-hot encoding to all categorical variables in the synthetic dataset and train a classification model to estimate the probability $\tilde{p}$
 that a driver will file a claim. We then fit a regression model for claim amount $\tilde{y}$, augmenting the features with the predicted (or classified) claim indicator to capture dependence between frequency and severity. We evaluated a range of algorithms—including generalized linear models, tree-based ensembles, and neural networks—and found that XGBoost \cite{chen2016xgboost} delivered the strongest performance on both tasks. This choice is further supported by XGBoost’s native facilities for modeling heavy-tailed targets such as claim costs and for addressing class imbalance in claim occurrence\cite{nielsen2016tree}. The techniques we adopted in training the XGBoost models includes stratified sampling for train/validation/test data split, oversampling the minority class and hyperparameter tuning using Optuna \cite{akiba2019optuna}. 
Table \ref{tab:cls-metrics} presents the confusion matrix and key metrics of the classification model calculated on 15,000 test samples. To use a single-number measure of binary classification quality, the Matthews Correlation Coefficient (MCC) \cite{chicco2020advantages} of our model is 0.75, qualifying it as a high-signal classifier for a rare-event task. Figure \ref{fig:feature_importance} shows the feature importance for the classification model and the features marked in red are telematics features. The plot demonstrates that telematics features are very predictive of the claim probability. The claim amount prediction model also demonstrates strong agreement between predictions and observations. As shown by the parity plot, points concentrate along the $y=x$ line across the range of claim amounts, indicating good calibration. The model explains a substantial share of variance with $R^2=0.689$ while achieving low error (RMSE=633.7, MAE=107.9).which is considered a strong result compared to similar research \cite{gangaramrao2025vehicle, jonkheijmforecasting}.
\begin{table}[ht]
\centering
\begin{tabular}{lcc}
\toprule
\textbf{Confusion Matrix} & \textbf{Predicted 0} & \textbf{Predicted 1} \\
\midrule
\textbf{True 0} & 14{,}290 (TN) & 69 (FP) \\
\textbf{True 1} & 211 (FN) & 430 (TP) \\
\bottomrule
\end{tabular}

\vspace{0.75em}

\begin{tabular}{lc}
\toprule
\textbf{Metric} & \textbf{Value} \\
\midrule
Accuracy & 0.9813 (98.13\%) \\
Precision (PPV) & 0.8617 (86.17\%) \\
Recall (TPR) & 0.6708 (67.08\%) \\
F1 score & 0.7544 (75.44\%) \\
\bottomrule
\end{tabular}
\caption{Confusion matrix and classification metrics (N=15{,}000).}
\label{tab:cls-metrics}
\end{table}

\begin{figure}
    \centering
    \includegraphics[width=0.8\columnwidth]{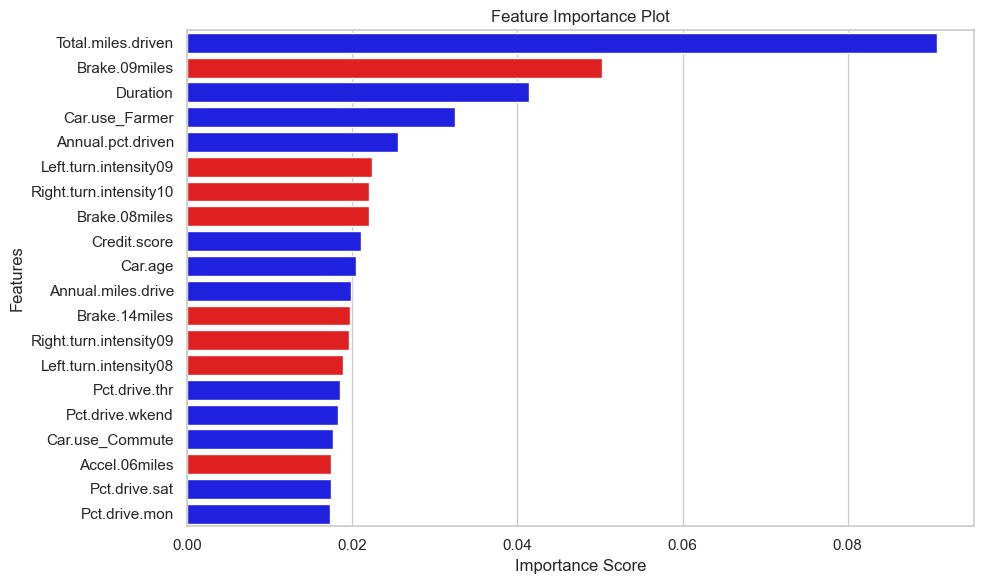}
    \caption{Feature importance for claim classification}
    \label{fig:feature_importance}
\end{figure}
\begin{figure}
    \centering
    \includegraphics[width=0.8\columnwidth]{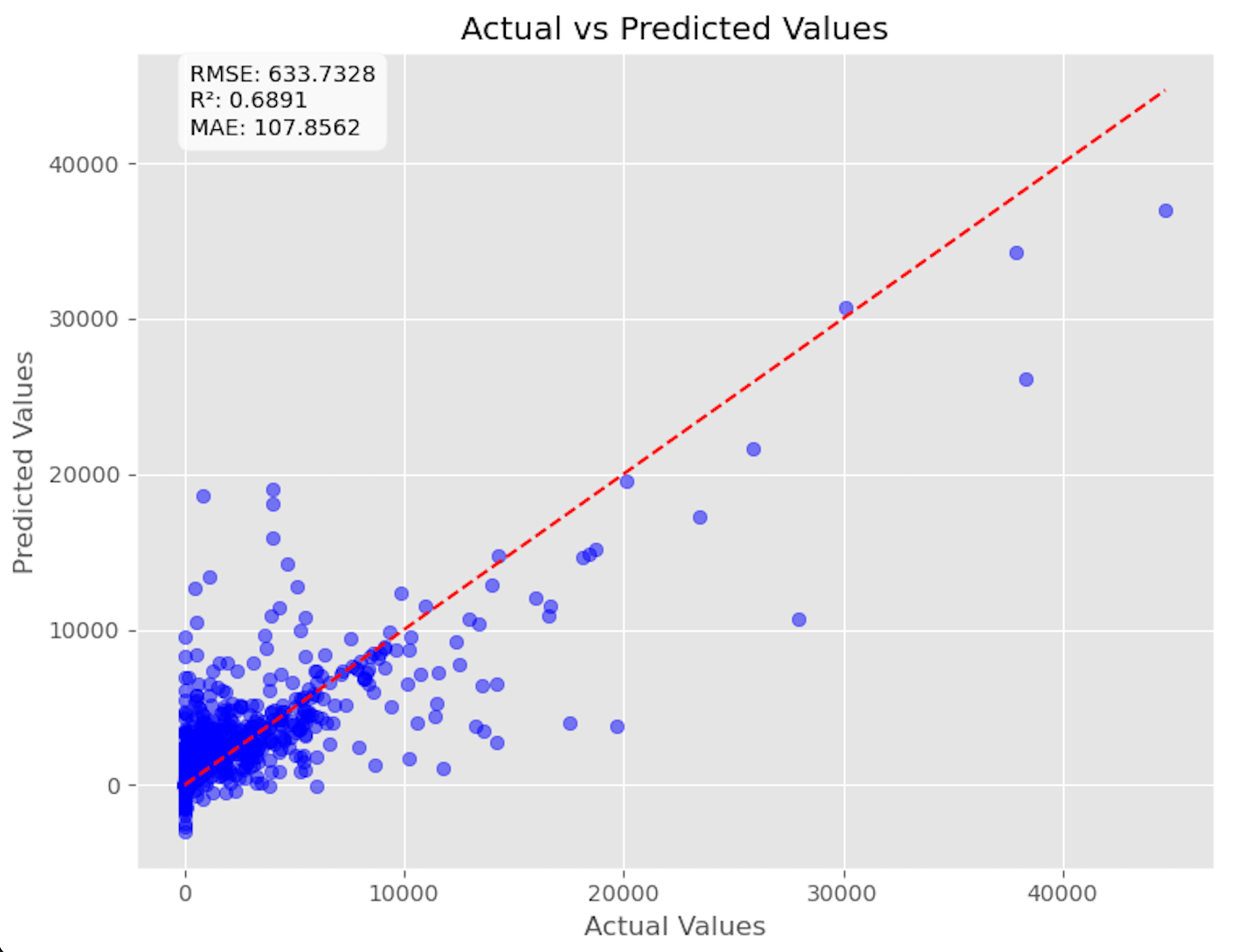}
    \caption{Parity plot for claim amount regression model}
    \label{fig:amount_parity}
\end{figure}

\subsection{Experiment Setup}
Now we describe the setup of our experiments. Notably, our pricing framework is interactive where drivers change their behavior based on their sensitivity to rewards and the insurance company provide new incentives after observing the change in drivers' behavior. Since the dataset is static covering one-time observation over a year, we need to synthesize the change in the driver's telematics features by ourselves. We assume that the driver's telematics features in the dataset (mostly representing dangerous driving behaviors such as number of sharp turns, accelerations and hard brakes) will evolve following \eqref{behavior_dynamics}, i.e, these unwanted driving behaviors will be reduced proportionally to the discount and return to baseline level if no discount is applied. To simulate individual's varying sensitivity levels, we randomly assign drivers with discount sensitivity parameter $\beta_p^{i}, \beta_y^{i}$ and rate of returning to baseline parameter $\theta^p_i, \theta^y_i$ from a uniform distribution. We denote their respective ranges as $\mathcal{B}, \Theta$. Also, the synthetic dataset does not include premium for each driver, so we assign a random initial premium $B_i$ uniformly from a range $\mathcal{P}$. We perform sensitivity analysis on how different choices of these parameters will affect the overall profitability and safety of the UBI program. The experiment runs as follows: at the beginning, we initialize the discounts by providing each driver with a uniform random discount in $[0,0.2]$, and then apply the changes to the driver's telematics features based on the discount and their sensitivity parameters. After the telematics features are updated, we use the XGBoost models to predict claim probability. $\tilde{p}$ and claim amount $\tilde{y}$. With these two estimates and discount history, the sensitivity learning optimization problem can be solved. Following that, we solve the premium optimization problem based on learned parameters using the Dual Decomposition algorithm.We set the total planning horizon $T = 8$ years as industry reports \cite{JDPower2025Shopping} shows that a typical retention in auto insurance is around this number. At iteration $s$, we solve for next $T-s$ years and discounts for calculated for year $s$ are applied. We record the amounts of reward and total expected cost for the company to evaluate the performance of our framework.
\subsection{Results and Discussion}
\begin{figure*}[t!]
  \centering
  \subfloat[$\mathcal{B}=(1,1.5), \Theta=(0.1,0.5), \mathcal{P}=(600,1000)$]{%
    \includegraphics[width=0.4\textwidth]{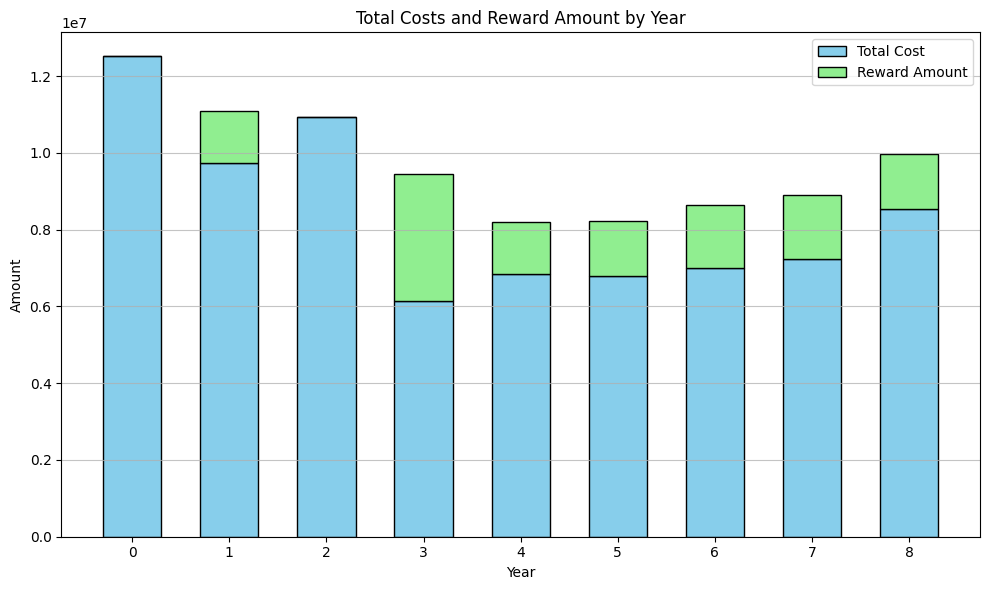}\label{fig:ieee-a}}
    \hspace{0.03\textwidth}
  \subfloat[$\mathcal{B}=(1,3), \Theta=(0.1,0.5), \mathcal{P}=(600,1000)$]{%
    \includegraphics[width=0.4\textwidth]{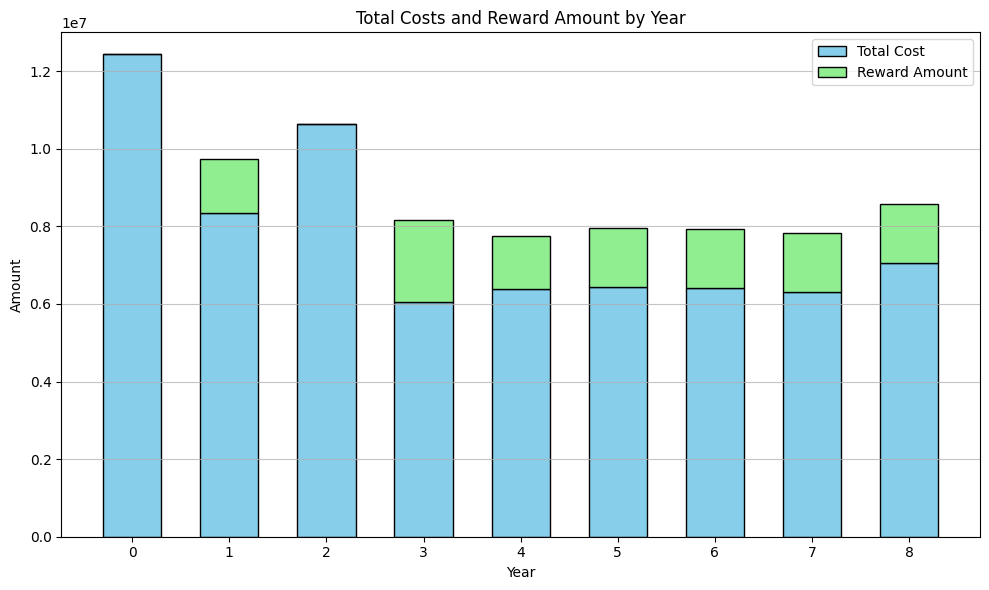}\label{fig:ieee-b}}
  \hfill
  \subfloat[$\mathcal{B}=(1,1.5), \Theta=(0.3,0.7), \mathcal{P}=(600,1000)$]{%
    \includegraphics[width=0.4\textwidth]{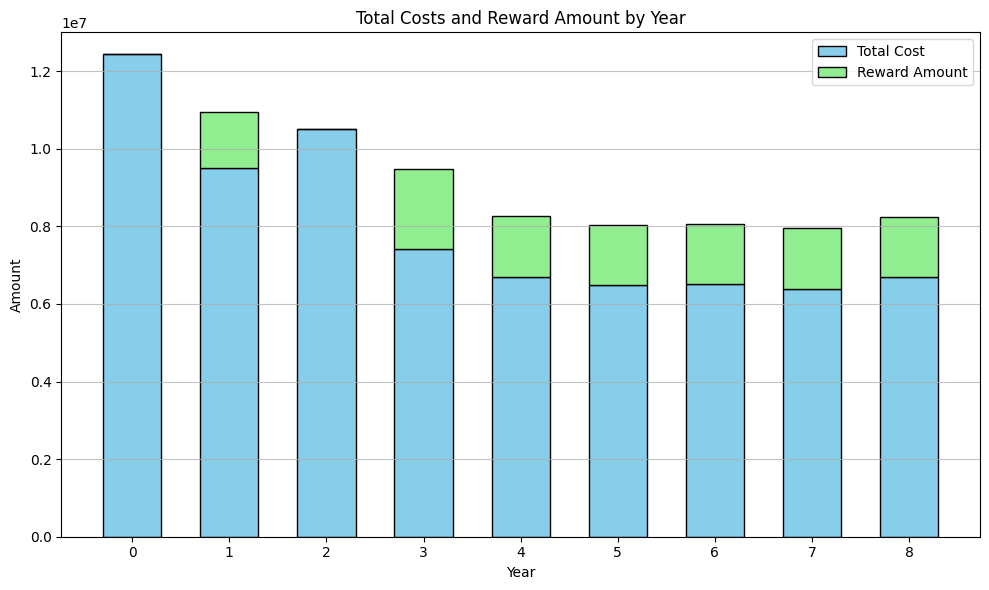}\label{fig:ieee-c}}
  \label{fig:ieee-fig1}
  \hspace{0.03\textwidth}
  \subfloat[$\mathcal{B}=(1,1.5), \Theta=(0.1,0.5), \mathcal{P}=(1000,1400)$]{%
    \includegraphics[width=0.4\textwidth]{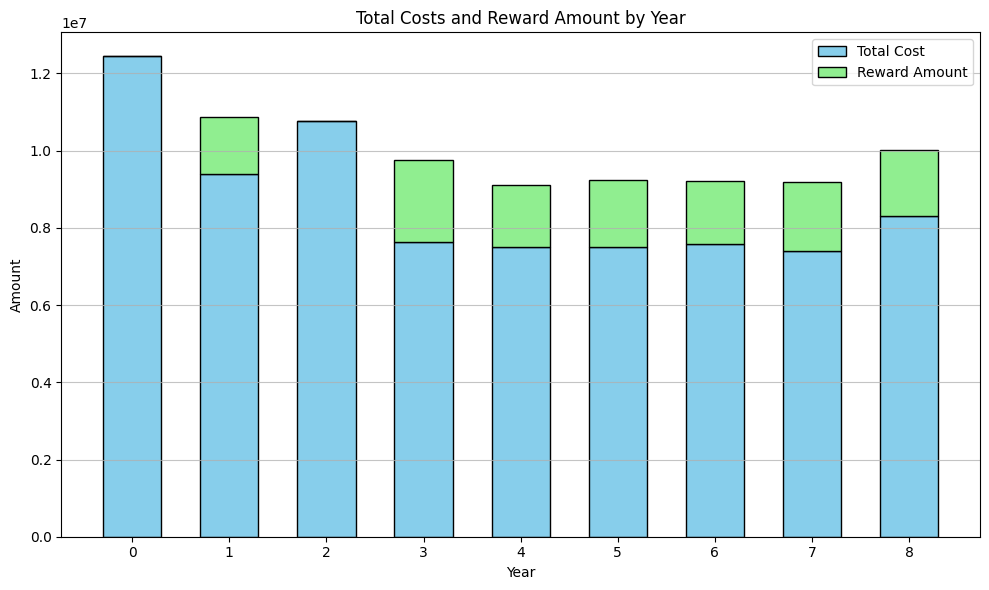}\label{fig:ieee-d}}
\caption{Total cost and reward amount by year across different parameters}
\label{fig:final_fig}
\end{figure*}
\begin{figure}
    \centering
    \includegraphics[width=0.9\columnwidth]{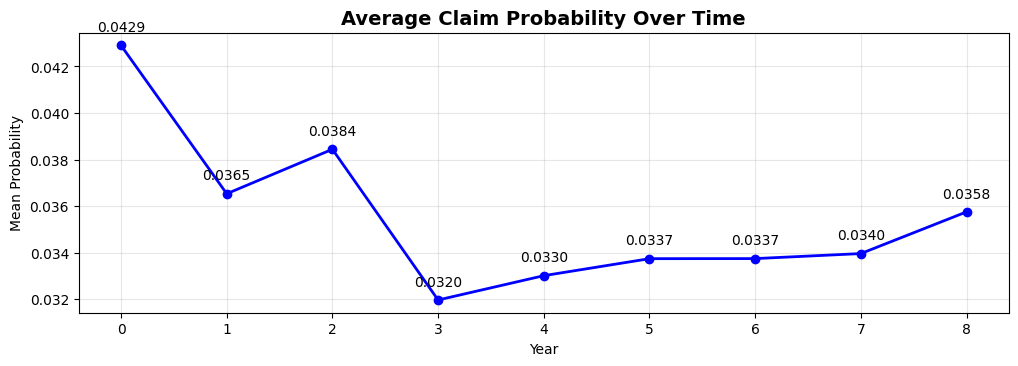}
    \caption{Changes in predicted claim probability averaged over all drivers}
    \label{fig:claim_probability}
\end{figure}
Figure \ref{fig:final_fig} reports the annual total claim cost and reward outlay under our adaptive pricing framework. In each panel, the blue bars represent expected claim costs ($\sum_{i=1}^N p_{i,t}y_{i,t}$) and the green bars represent the rewards paid (i.e., the cost of the incentive program) on top of the claim cost. The height of each stacked bar equals the company’s total loss given a fixed premium; thus, shorter bars are preferable. As we can see from the plots, our framework steadily and significantly reduces the total loss of the company compared to year 0 when no incentive program is implemented. More specifically, under the parameters in Fig. \ref{fig:ieee-a}, the total loss in year 0 is \$12.44 million, while the average across subsequent years is \$9.48 million, a 23\% reduction relative to the baseline. We also investigate how the choice of parameters would affect the algorithm performance. In Fig. \ref{fig:ieee-b}, we increase the average value for incentive sensitivity parameters $\beta_p^{i}, \beta_y^{i}$, and we see a decreased average total loss across years to \$8.57 million. This is expected because once the drivers are more sensitive to the incentive, our rewards reduce their risk of claim and claim amount more. In Fig. \ref{fig:ieee-c}, we increase the average value for the rate of returning to baseline parameters $\theta^p_i, \theta^y_i$. This also reduces the average total loss to \$8.93 million. Observing the dynamics $\theta^{p}_{i}(p_{i,t} - P_{i}) + P_{i}$, a smaller $\theta^p_i$ means that less value from previous state is preserved and the baseline value $P_i$ is more dominant. When we increase $\theta^p_i$, we increase the driver's stickiness to the encouraged behavior and thus reducing the total loss. In Fig. \ref{fig:ieee-d}, we increase the range for initial premium $B_i$, resulting in slightly increased average total loss of \$9.73 million. This is intuitive because our discount is proportional to the initial premium. When the initial premium is higher, the company pays more to reduce the same amount of claim risk which may not be ideal. Overall, sensitivity analysis indicates that our algorithm is responsive to the parameter changes and can capture nuanced individual heterogeneity. Another noteworthy point is that our framework benefits not only the insurer but also policyholders. As shown in Fig. \ref{fig:claim_probability} (under the setting of Fig. \ref{fig:ieee-a}), the average predicted claim probability across drivers declines after program enrollment—dropping from approximately 0.043 at year 0 to around 0.036 afterwards. This pattern suggests that incentives and feedback induce safer driving behavior that persists beyond the initial period, even as some attenuation appears over time. In short, the program yields mutual gains: reduced expected risk exposure for drivers and lower expected losses for the insurer.
\section{Conclusion}
We developed a finite horizon optimal control framework to address the computational and theoretical challenges of dynamic auto-insurance pricing. We used a Lagrangian relaxation approach to decouple the formulation into independent dynamic programs to solve this large problem efficiently. Our theoretical analysis proves that the duality gap of this relaxation is uniformly bounded by the time horizon, guaranteeing asymptotic optimality as the portfolio size grows. Empirically, the framework proves highly scalable for large portfolios, successfully aligning insurer profitability with improved road safety by reducing expected total losses compared to static baselines. Finally, comprehensive sensitivity analysis confirms that our dynamic pricing strategy remains highly robust across a wide range of operational budgets and underlying behavioral dynamics, cementing its value as a practical tool for insurers.
\bibliographystyle{IEEEtran}
\bibliography{IEEEabrv,refs}
\end{document}